\documentclass[11pt,letterpaper]{article}
\usepackage[T1]{fontenc}
\usepackage[utf8]{inputenc}
\usepackage{lmodern}
\usepackage{authblk}
\usepackage{amssymb,amsfonts}
\usepackage[all,arc]{xy}
\usepackage{enumerate}
\usepackage{mathrsfs}
\usepackage{amsthm}
\usepackage{amsmath} 
\usepackage{tikz}
\usepackage{mathtools}
\usepackage{tikz}
\usepackage{float}
\usepackage{bbm}

\newtheorem{thm}{Theorem}[section]

\newtheorem{prop}[thm]{Proposition}

\newtheorem{defn}[thm]{Definition}

\newtheorem{exmp}[thm]{Example}

\newtheorem{rem}[thm]{Remark}

\makeatletter
\let\c@equation\c@thm
\makeatother

\title{Path Averaged Polynomial Contractions: A New Generalization of Polynomial Contractions, Path-Averaged Contractions, and Banach Contractions}
\author[1]{Clement Boateng Ampadu\thanks{profampadu@gmail.com}}
\affil[1]{31 Carrolton Road, Boston, MA 02132-6303, USA}
\author[2]{Nicola Fabiano\thanks{nicola.fabiano@gmail.com}}
\affil[2]{
``Vin\v{c}a'' Institute of Nuclear Sciences - National 
Institute of the Republic of Serbia, University of Belgrade, Mike Petrovi\'{c}a 
Alasa 12--14, 11351 Belgrade, Serbia}

\usepackage{hyperref}% http://ctan.org/pkg/hyperref
\AtBeginDocument{%
  \let\oldref\ref% 
  \def\ref{\oldref*}}

 \showoutput
\begin{document}

  \maketitle

\begin{abstract}
\noindent  The notion of  polynomial contraction appeared in \cite{2}, whilst the notion of path-averaged contraction appeared in \cite{3} for metric spaces, in~\cite{Fbmetric,Fbmetric2} for b-metric spaces and~\cite{Fsupra} in suprametric spaces. In this paper, we combine both notions to introduce path-averaged polynomial contractions, as a generalization of polynomial contractions, path-averaged contractions, and Banach contractions. We obtain a fixed point theorem for such contractions in the setting of metric spaces. We give an example showing path-averaged polynomial contractions are not Banach contractions.
\end{abstract}

\begin{flushleft}
\textbf {AMS SUBJECT CLASSIFICATIONS:}  47H10, 54H25, 54E50
\end{flushleft} 
\begin{flushleft}
\textbf{KEYWORDS AND PHRASES:} polynomial contraction, path-averaged contraction, Banach contraction, metric space, fixed point theorem
\end{flushleft}

             \tableofcontents

\section{\textbf{Introduction and Preliminaries}}

\begin{thm} \cite{1} Let $(X,d)$ be a complete metric space, and $T:X\mapsto X$ satisfies
$$d(Tx,Ty)\leq k d(x,y)$$ for all $x,y\in X$ and some $k\in (0,1)$. Then $T$ has a unique fixed point.
\end{thm}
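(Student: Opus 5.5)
The plan is the standard Picard iteration argument. Fix an arbitrary $x_0\in X$ and define $x_{n+1}=Tx_n$ for $n\geq 0$.

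\textbf{Geometric decay of consecutive distances.} Applying the contraction inequality to $x=x_n$, $y=x_{n-1}$ gives $d(x_{n+1},x_n)\leq k\,d(x_n,x_{n-1})$. By induction,
$$d(x_{n+1},x_n)\leq k^n d(x_1,x_0).$$

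\textbf{Cauchy property.} For $m>n$, the triangle inequality and the geometric series give
$$d(x_m,x_n)\leq \sum_{j=n}^{m-1}k^j d(x_1,x_0)\leq \frac{k^n}{1-k}\,d(x_1,x_0).$$
The right-hand side tends to $0$ as $n\to\infty$ because $k\in(0,1)$. Hence $(x_n)$ is Cauchy, and by completeness it converges to some $x^*\in X$.

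\textbf{$x^*$ is a fixed point.} The contraction condition forces $T$ to be Lipschitz, hence continuous. Directly,
$$d(Tx^*,x^*)\leq d(Tx^*,Tx_n)+d(x_{n+1},x^*)\leq k\,d(x^*,x_n)+d(x_{n+1},x^*)\to 0,$$
so $Tx^*=x^*$.

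\textbf{Uniqueness.} Suppose $Tx^*=x^*$ and $Ty^*=y^*$. Then $d(x^*,y^*)=d(Tx^*,Ty^*)\leq k\,d(x^*,y^*)$. Since $k<1$, this forces $d(x^*,y^*)=0$, i.e.\ $x^*=y^*$.

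The only step with real content is the Cauchy estimate. It hinges on summing the geometric series, which is exactly where $k<1$ is essential. Everything else is routine.
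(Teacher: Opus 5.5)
The paper gives no proof of this statement. It is Banach's contraction principle, quoted as background. Your Picard-iteration proof is correct and complete: geometric decay of consecutive distances, the tail bound $d(x_m,x_n)\le \frac{k^n}{1-k}\,d(x_1,x_0)$, the limit step $d(Tx^*,x^*)\le k\,d(x^*,x_n)+d(x_{n+1},x^*)\to 0$, and uniqueness from $(1-k)\,d(x^*,y^*)\le 0$.

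The nearest argument in the paper is the proof of its main theorem in Section 2. That theorem contains this statement as the special case $r=1$, $a_0\equiv 0$, $a_1\equiv 1$, $j=1$, $N=1$, using the proposition of Section 3 and the fact that a $k$-Lipschitz map is continuous.

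That proof runs differently from yours. It never shows that individual steps $d(z_k,z_{k+1})$ shrink geometrically. Instead it shows that the partial sums $S_n=\sum_{k<n} d(z_k,z_{k+1})$ satisfy $S_{n+1}\le \alpha S_n + d(z_0,z_1)$. Hence they are bounded, the series of consecutive distances converges, and the Picard sequence is Cauchy. It then gets $Tz^*=z^*$ from the continuity of $T$, which is imposed as a separate hypothesis. The uniqueness arguments are essentially the same.

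The summability route is what still works when the contraction holds only on average along orbits, so single steps need not decrease. Your route relies on the pointwise contraction and gives more. Letting $m\to\infty$ in your Cauchy estimate yields the explicit rate $d(x_n,x^*)\le \frac{k^n}{1-k}\,d(x_0,x_1)$, which the partial-sum bound does not provide. Your fixed-point step also needs no separately assumed continuity.
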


\begin{defn}\cite{2}  Let $(X,d)$ be a metric space and $T:X\mapsto X$ be a given mapping. We say that $T$ is a polynomial contraction if there exists $\lambda \in [0,1)$, a natural number $k\geq 1$, and a family of mappings $a_i:X\times X\mapsto [0,\infty)$, $i=0,\cdots, k$, such that
$$\sum_{i=0}^{k} a_i(Tx,Ty) d^{i}(Tx,Ty)\leq \lambda \sum_{i=0}^{k} a_i(x,y) d^{i}(x,y)$$ for every $x,y\in X$
\end{defn}

\begin{thm}\cite{2} Let $(X,d)$ be a complete metric space and $T:X\mapsto X$ be a polynomial contraction. Assume that the following conditions hold
\begin{itemize}
\item[$(i)$] $T$ is continuous
\item[$(ii)$]  there exists $j\in \{1,\cdots, k\}$ and $A_j>0$ such that $$a_j(x,y)\geq A_j$$ for  all $x,y\in X$
\end{itemize}
Then, $T$ admits a unique fixed point $z^{*}\in X$. Moreover for every $z_0\in X$, the Picard sequence $\{z_n\}\subset X$ defined by $z_{n+1}=Tz_n$ for all $n\geq 0$, converges to
$z^{*}$
\end{thm}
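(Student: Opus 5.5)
Throughout, write $\Phi(x,y)=\sum_{i=0}^{k} a_i(x,y)\,d^{i}(x,y)$ for $x,y\in X$, so that the polynomial contraction condition reads $\Phi(Tx,Ty)\leq \lambda\,\Phi(x,y)$. Every summand of $\Phi$ is nonnegative, with the convention $d^0(x,y)=1$ for the $i=0$ term. Hence hypothesis $(ii)$ gives the lower bound
$$A_j\, d^{j}(x,y)\leq a_j(x,y)\,d^{j}(x,y)\leq \Phi(x,y)\quad\text{for all } x,y\in X.$$
This single inequality is what lets us pass from control of $\Phi$ back to control of the metric $d$. The whole proof is organised around it.

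\textbf{Existence.} Fix $z_0\in X$ and let $z_{n+1}=Tz_n$. Applying the contraction condition to the pair $(z_{n-1},z_n)$ and iterating gives $\Phi(z_n,z_{n+1})\leq \lambda^{n}\,\Phi(z_0,z_1)$. Combined with the lower bound above, this yields
$$d(z_n,z_{n+1})\leq \left(\frac{\Phi(z_0,z_1)}{A_j}\right)^{1/j}\mu^{n},\qquad \mu:=\lambda^{1/j}\in[0,1).$$
(If $\Phi(z_0,z_1)=0$, this already gives $z_1=z_0$, so $z_0$ is a fixed point.) The triangle inequality and summation of the geometric series give, for $m>n$,
$$d(z_n,z_m)\leq C\,\frac{\mu^{n}}{1-\mu},\qquad C:=\left(\frac{\Phi(z_0,z_1)}{A_j}\right)^{1/j}.$$
Thus $\{z_n\}$ is Cauchy, and by completeness it converges to some $z^{*}\in X$. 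Continuity of $T$ (hypothesis $(i)$) gives $Tz^{*}=\lim Tz_n=\lim z_{n+1}=z^{*}$. This also shows that the Picard sequence from any starting point $z_0$ converges to a fixed point.

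\textbf{Uniqueness.} Suppose $z,w$ are both fixed points. Then $\Phi(z,w)=\Phi(Tz,Tw)\leq\lambda\,\Phi(z,w)$, and since $\lambda<1$ and $\Phi\geq 0$, we get $\Phi(z,w)=0$. The lower bound then gives $A_j d^{j}(z,w)=0$, so $d(z,w)=0$ because $j\geq1$ and $A_j>0$. Hence $z=w$. In particular, the limit of every Picard sequence is the same point $z^{*}$.

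\textbf{Main obstacle.} The delicate step is extracting metric estimates from $\Phi$, since $\Phi$ is not itself a metric and the coefficients $a_i$ vary with the point. This is exactly why hypothesis $(ii)$ must hold uniformly in $(x,y)$, and why $j\geq 1$ is required. The $a_0$ term could otherwise be a nuisance, because it need not vanish when $x=y$. It is harmless here because we only ever bound $\Phi$ from below by one nonnegative summand.
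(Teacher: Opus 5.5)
Your proof is correct, including the edge cases $\lambda=0$ and $\Phi(z_0,z_1)=0$, and it takes a different route from the paper. The paper does not prove this statement itself, since it is quoted from the earlier work on polynomial contractions. The relevant argument is the proof of the generalization, Theorem 2.3, which covers this statement via Proposition 3.1 (a polynomial contraction is a PA-polynomial contraction with $N=1$).

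\textbf{The paper's route.} That proof bounds the partial sums $S_n=\sum_{k<n}P_k$, where $P_k=\Phi(z_k,z_{k+1})$. This yields only $\sum_k P_k<\infty$, hence $\sum_k d^j(z_k,z_{k+1})<\infty$. As the paper's own note concedes, it then needs $j=1$ to conclude via the triangle inequality that $\{z_n\}$ is Cauchy. For $j>1$, summability of $d^j(z_k,z_{k+1})$ does not give summability of $d(z_k,z_{k+1})$.

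\textbf{Your route.} You use the one-step inequality to get the pointwise geometric bound $\Phi(z_n,z_{n+1})\le\lambda^n\Phi(z_0,z_1)$, and then take $j$-th roots. This gives $d(z_n,z_{n+1})\le C\mu^n$ with $\mu=\lambda^{1/j}<1$. It settles the Cauchy property for every $j\ge 1$ and also gives the explicit estimate $d(z_n,z^*)\le C\mu^n/(1-\mu)$.

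\textbf{What each buys.} The paper's summation argument is built for the path-averaged setting, where no one-step contraction is available; for the statement at hand, your direct iteration is the sharper tool. Your idea also carries over to the path-averaged setting. Apply the PA condition to the pair $(z_m,z_{m+1})$ and let $n\to\infty$. The tail sums $R_m=\sum_{k\ge m}P_k$ then satisfy $R_{m+1}\le\alpha R_m$, so $P_m\le\alpha^m R_0$, and your $j$-th root step removes the restriction $j=1$ there too.

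\textbf{Uniqueness.} The two uniqueness arguments coincide: the paper's $nQ\le\alpha nQ$ is your $\Phi(z,w)\le\lambda\Phi(z,w)$ summed $n$ times.
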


\begin{defn}\cite{3} Let $(X,d)$ be a metric space. A mapping $T:X\mapsto X$ is called a PA- contraction (Path Averaged Contraction) if there exists $\alpha \in (0,1)$ and $N\in \mathbb{N}$ such that for all $x,y\in X$ and all $n\geq N$
$$\sum_{k=0}^{n-1} d(T^{k+1}x, T^{k+1}y)\leq \alpha \sum_{k=0}^{n-1} d(T^{k}x,T^{k}y)$$
\end{defn}

\begin{thm}\cite{3} Let $(X,d)$ be a complete metric space, and let $T:X\mapsto X$ be a continuous PA-contraction. Then $T$ has a unique fixed point $x^{*}\in X$, and for any
$x_0\in X$, the Picard sequence $x_n=T^{n}x_0$ converges to $x^{*}$.
\end{thm}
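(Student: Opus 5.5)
Write $d_k := d(T^k x_0, T^{k+1} x_0)$ for the Picard sequence of an arbitrary $x_0\in X$. The strategy is to show that the partial sums $S_n=\sum_{k=0}^{n-1} d_k$ stay bounded. Then $\sum d_k<\infty$, so $\{x_n\}$ is Cauchy. Completeness gives a limit, continuity makes the limit a fixed point, and the PA-inequality forces uniqueness.

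\textbf{Bounding the partial sums.} Apply the PA-contraction inequality with $x=x_0$ and $y=Tx_0$. For every $n\ge N$ this gives
$$\sum_{k=0}^{n-1} d_{k+1}\le \alpha \sum_{k=0}^{n-1} d_k, \qquad\text{i.e.}\qquad S_{n+1}-d_0\le \alpha S_n .$$
Since $S_n\le S_{n+1}$, we get $(1-\alpha)S_n\le d_0$, so $S_n\le d_0/(1-\alpha)$ for all $n\ge N$. Monotonicity then bounds $S_n$ for all $n$. Hence $\sum_k d_k$ converges, and $d(x_n,x_m)\le\sum_{k=n}^{m-1}d_k\to 0$. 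So $\{x_n\}$ is Cauchy and converges to some $x^*\in X$.

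\textbf{Existence of the fixed point.} By continuity, $Tx^*=\lim Tx_n=\lim x_{n+1}=x^*$.

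\textbf{Uniqueness.} Suppose $y^*$ is another fixed point. Apply the PA-inequality to $x=x^*$, $y=y^*$ with $n=N$. Every term equals $d(x^*,y^*)$, so
$$N\,d(x^*,y^*)\le \alpha N\, d(x^*,y^*).$$
Since $\alpha<1$, this forces $d(x^*,y^*)=0$.

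\textbf{Main obstacle.} The key step is getting the bound on $S_n$. The PA-inequality only compares shifted sums, not consecutive distances, so there is no geometric decay of the individual $d_k$. The telescoping identity $\sum_{k=0}^{n-1} d_{k+1}=S_{n+1}-d_0$, combined with the monotonicity of $S_n$, handles this directly.

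Convergence of the series $\sum_k d_k$ is all the Cauchy argument needs; no rate of decay is required.
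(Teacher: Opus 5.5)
Your proof is correct and follows essentially the same route as the paper, which does not prove this cited theorem itself; its proof of the PA-polynomial fixed point theorem (Theorem 2.3) specializes to this statement with $r=1$, $a_0\equiv 0$, $a_1\equiv 1$: apply the contraction to $(x_0,Tx_0)$, rewrite it as $S_{n+1}-d_0\le\alpha S_n$, bound the partial sums, and conclude via summability, completeness, continuity, and the constant-sum uniqueness argument. The one difference is in your favor: you bound $S_n$ in a single step using $S_n\le S_{n+1}$, which needs the inequality only for $n\ge N$, whereas the paper iterates $S_{n+1}\le\alpha S_n+P_0$ starting from $S_1$ and so tacitly uses it for $n<N$ as well.
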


\section{\textbf{Path-Averaged Polynomial Contractions}}

\begin{defn} Let $(X,d)$ be a metric space. A mapping $T:X\mapsto X$ is called a PA-polynomial contraction (Path-Averaged Polynomial Contraction) if there exists $\alpha \in (0,1)$ and
$N\in\mathbb{N}$ such that for all $x,y\in X$ and all $n\geq N$ we have
$$\sum_{k=0}^{n-1}\sum_{i=0}^{r} a_i(T^{k+1}x,T^{k+1}y)d^{i}(T^{k+1}x,T^{k+1}y)\leq \alpha \sum_{k=0}^{n-1}\sum_{i=0}^{r} a_i(T^{k}x,T^{k}y)d^{i}(T^{k}x,T^{k}y)$$
\end{defn}

\begin{rem} The PA-polynomial contraction generalizes several known contractions in the literature as follows
\begin{itemize}
\item[$(i)$] If $n=1$, then the above contraction reduces to
$$\sum_{i=0}^{r} a_i(Tx,Ty)d^{i}(Tx,Ty)\leq \alpha \sum_{i=0}^{r} a_i(x,y)d^{i}(x,y)$$ that is $T$ is a polynomial contraction\cite{2}
\item[$(ii)$] If $n=1$, $r=1$, $a_0\equiv 0$, and $a_1\equiv 1$, then the above contraction reduces to
$$d(Tx,Ty)\leq \alpha d(x,y)$$ that is $T$ is a Banach contraction \cite{1}
\item[$(iii)$] If  $r=1$, $a_0\equiv 0$, and $a_1\equiv 1$, then the above contraction reduces to
$$\sum_{k=0}^{n-1} d(T^{k+1}x,T^{k+1}y)\leq \alpha \sum_{k=0}^{n-1} d(T^{k}x,T^{k}y)$$ that is $T$ is a path-averaged contraction \cite{3}
\end{itemize}
\end{rem}

%%%%%%%%%%%%%%%%%%%%%%%%%%%%%%%%%%%%%%%%%%%%%%%%%%

\begin{thm}
Let $(X, d)$ be a complete metric space and $T: X \to X$ be a path-averaged polynomial contraction. Assume the following conditions hold:
\begin{enumerate}
    \item[(i)] $T$ is continuous.
    \item[(ii)] There exists an index $j \in \{1, \dots, r\}$ and a constant $A_j > 0$ such that
    \begin{equation}
    a_j(x, y) \geq A_j
    \end{equation}
    for all $x, y \in X$.
\end{enumerate}
Then, $T$ admits a unique fixed point $z^* \in X$. Moreover, for every $z_0 \in X$, the Picard sequence $\{z_n\} \subset X$ defined by $z_n = T^n z_0$ for all $n \geq 0$, converges to $z^*$.
\end{thm}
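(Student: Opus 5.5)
The plan is to reduce everything to the nonnegative quantity
$$P(x,y)=\sum_{i=0}^{r} a_i(x,y)\,d^{i}(x,y),$$
and to work along the Picard orbit. Writing $S_n(x,y)=\sum_{k=0}^{n-1}P(T^kx,T^ky)$, the defining inequality says $S_n(Tx,Ty)\le \alpha S_n(x,y)$ for all $n\ge N$. Fix $z_0\in X$, put $z_k=T^kz_0$ and $p_k=P(z_k,z_{k+1})\ge 0$.

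\textbf{Step 1: summability of $(p_k)$.} Apply the contraction with $x=z_m$ and $y=z_{m+1}$. Since $T^kx=z_{m+k}$ and $T^ky=z_{m+k+1}$, this gives
$$\sum_{k=m+1}^{m+n}p_k\le \alpha\sum_{k=m}^{m+n-1}p_k \qquad (n\ge N).$$
For $m=0$ the right-hand side is at most $\alpha p_0+\alpha\sum_{k=1}^{n}p_k$. Rearranging yields $(1-\alpha)\sum_{k=1}^{n}p_k\le \alpha p_0$ for all $n\ge N$. The partial sums are therefore bounded, and since $p_k\ge 0$ the series $\sum_k p_k$ converges.

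\textbf{Step 2: geometric decay of the tails.} Define the tails $R_m=\sum_{k\ge m}p_k<\infty$. Letting $n\to\infty$ in the displayed inequality (for general $m$) gives $R_{m+1}\le \alpha R_m$, hence $R_m\le \alpha^mR_0$ and in particular $p_m\le \alpha^m R_0$.

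This step is exactly where I expect the main obstacle, and why it is needed. Condition (ii) only gives $A_j\,d^{j}(z_m,z_{m+1})\le p_m$. Mere summability of $\sum_m d^j(z_m,z_{m+1})$ would not yield a Cauchy sequence when $j>1$, so the geometric tail bound is essential. With it we obtain
$$d(z_m,z_{m+1})\le \Big(\frac{R_0}{A_j}\Big)^{1/j}\big(\alpha^{1/j}\big)^m,$$
a geometric bound with ratio $\alpha^{1/j}<1$. Consequently $(z_m)$ is Cauchy by the triangle inequality.

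\textbf{Step 3: existence, convergence and uniqueness.}
\begin{itemize}
\item By completeness, $z_m\to z^*$ for some $z^*\in X$.
\item By continuity of $T$, $Tz^*=\lim z_{m+1}=z^*$, so $z^*$ is a fixed point and the Picard sequence converges to it for every $z_0$.
\item For uniqueness, suppose $u,v$ are fixed points. Taking $x=u$, $y=v$ and $n=N$ in the definition gives $N\,P(u,v)\le \alpha N\,P(u,v)$. Since $P(u,v)\ge 0$ and $\alpha<1$, this forces $P(u,v)=0$. Then $A_j d^j(u,v)\le P(u,v)=0$ with $j\ge1$, so $u=v$.
\end{itemize}
Here it is used that the $a_i$ are finite-valued and nonnegative, as in the polynomial contraction setting of \cite{2}.
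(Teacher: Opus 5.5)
Your proof is correct, and at the decisive Cauchy step it takes a different and stronger route than the paper.

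The paper applies the contraction only once, at $(x,y)=(z_0,z_1)$. From the recursion $S_{n+1}\le \alpha S_n+P_0$ it gets $\sum_k P_k<\infty$, and hence $\sum_k d^j(z_k,z_{k+1})<\infty$. For the triangle-inequality step it then explicitly falls back on $j=1$ (see its note after that display). So, as written, it only covers $j=1$. For $j>1$, summability of $d^j(z_k,z_{k+1})$ alone does not give a Cauchy sequence: take steps of length $1/k$ in $\mathbb{R}$ with $j=2$.

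You instead apply the condition at every orbit pair $(z_m,z_{m+1})$ and let $n\to\infty$. This yields $R_{m+1}\le\alpha R_m$, hence $p_m\le\alpha^m R_0$, and so the geometric bound $d(z_m,z_{m+1})\le (R_0/A_j)^{1/j}\alpha^{m/j}$. This works for every $j\in\{1,\dots,r\}$, so your argument proves the theorem as stated. It also gives an explicit rate,
$$d(z_m,z^*)\le \frac{(R_0/A_j)^{1/j}\,\alpha^{m/j}}{1-\alpha^{1/j}}.$$

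Your Step 1 rearrangement $(1-\alpha)\sum_{k=1}^{n}p_k\le\alpha p_0$ is also cleaner than the paper's induction. The paper states that induction for all $n\ge 1$, although the recursion is only available for $n\ge N$. Your uniqueness argument is the same as the paper's.
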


\begin{proof}
\textbf{Existence:}
Let $z_0 \in X$ be arbitrary. Define the Picard sequence $z_n = T^n z_0$ for $n \geq 0$.
Define the term $P_n$ as the polynomial evaluation along the sequence:
\begin{equation}
P_n = \sum_{i=0}^r a_i(z_n, z_{n+1}) d^i(z_n, z_{n+1})
\end{equation}
Applying the PA-polynomial contraction condition (Definition 2.1) to $x = z_0$ and $y = z_1 = T z_0$, we have for all $n \geq N$:
\begin{equation}
\sum_{k=0}^{n-1} \sum_{i=0}^r a_i(T^{k+1} z_0, T^{k+2} z_0) d^i(T^{k+1} z_0, T^{k+2} z_0) \leq \alpha \sum_{k=0}^{n-1} \sum_{i=0}^r a_i(T^k z_0, T^{k+1} z_0) d^i(T^k z_0, T^{k+1} z_0)
\end{equation}
Substituting $z_k = T^k z_0$, this becomes:
\begin{equation}
\sum_{k=0}^{n-1} P_{k+1} \leq \alpha \sum_{k=0}^{n-1} P_k
\end{equation}
Let $S_n = \sum_{k=0}^{n-1} P_k$ be the partial sum of the sequence $\{P_k\}$. Note that $S_{n+1} = S_n + P_n = \sum_{k=0}^{n} P_k = P_0 + \sum_{k=1}^{n} P_k = P_0 + \sum_{k=0}^{n-1} P_{k+1}$.
The inequality can be rewritten as:
\begin{equation}
S_{n+1} - P_0 \leq \alpha S_n \implies S_{n+1} \leq \alpha S_n + P_0
\end{equation}
We analyze the boundedness of $S_n$. By induction, it can be shown that for any $n \geq 1$:
\begin{equation}
S_n \leq \alpha^{n-1} S_1 + P_0 \sum_{m=0}^{n-2} \alpha^m \leq \alpha^{n-1} S_1 + \frac{P_0}{1-\alpha}
\end{equation}
Since $\alpha \in (0, 1)$ and $P_k \geq 0$ (as $a_i \geq 0$ and $d^i \geq 0$), the sequence of partial sums $\{S_n\}$ is non-decreasing and bounded above. Therefore, $\{S_n\}$ converges to a finite limit $S$, which implies that the series $\sum_{k=0}^\infty P_k$ converges.

Consequently, $\lim_{k \to \infty} P_k = 0$.
From condition (ii), we have $P_k \geq a_j(z_k, z_{k+1}) d^j(z_k, z_{k+1}) \geq A_j d^j(z_k, z_{k+1})$.
Thus, $\lim_{k \to \infty} d(z_k, z_{k+1}) = 0$.

To show $\{z_n\}$ is a Cauchy sequence:
Since $\sum_{k=0}^\infty P_k$ converges and $P_k \geq A_j d^j(z_k, z_{k+1})$, the series $\sum_{k=0}^\infty d^j(z_k, z_{k+1})$ converges. 

\textit{Note: In the context of metric fixed point theory generalizations, to guarantee the Cauchy property from this series convergence strictly via the triangle inequality, one typically requires $j=1$. Assuming $j=1$  allows the convergence for any $j>1$, but not viceversa.}
The convergence of $\sum_{k=0}^\infty d(z_k, z_{k+1})$ implies that for any $\epsilon > 0$, there exists $K$ such that for all $m \geq 1$ and $k \geq K$:
\begin{equation}
d(z_k, z_{k+m}) \leq \sum_{i=k}^{k+m-1} d(z_i, z_{i+1}) < \epsilon
\end{equation}
Thus, $\{z_n\}$ is a Cauchy sequence. Since $(X, d)$ is complete, there exists $z^* \in X$ such that $\lim_{n \to \infty} z_n = z^*$.
By the continuity of $T$ (condition (i)):
\begin{equation}
T z^* = T\left(\lim_{n \to \infty} z_n\right) = \lim_{n \to \infty} T z_n = \lim_{n \to \infty} z_{n+1} = z^*
\end{equation}
So $z^*$ is a fixed point of $T$.

\textbf{Uniqueness:}
Suppose $z^{**}$ is another fixed point of $T$ with $z^* \neq z^{**}$. Then $T^k z^* = z^*$ and $T^k z^{**} = z^{**}$ for all $k \geq 0$.
Applying the PA-polynomial contraction condition to $x = z^*$ and $y = z^{**}$:
\begin{equation}
\sum_{k=0}^{n-1} \sum_{i=0}^r a_i(z^*, z^{**}) d^i(z^*, z^{**}) \leq \alpha \sum_{k=0}^{n-1} \sum_{i=0}^r a_i(z^*, z^{**}) d^i(z^*, z^{**})
\end{equation}
Let $Q = \sum_{i=0}^r a_i(z^*, z^{**}) d^i(z^*, z^{**})$. The inequality becomes:
\begin{equation}
\sum_{k=0}^{n-1} Q \leq \alpha \sum_{k=0}^{n-1} Q \implies n Q \leq \alpha n Q
\end{equation}
From condition (ii), $Q \geq A_j d^j(z^*, z^{**}) > 0$ since $A_j > 0$ and $d(z^*, z^{**}) > 0$.
Dividing by $n Q$ (where $n \geq N$), we get $1 \leq \alpha$, which contradicts $\alpha \in (0, 1)$.
Therefore, $z^* = z^{**}$, and the fixed point is unique.

This completes the proof.
\end{proof}

%%%%%%%%%%%%%%%%%%%%%%%%%%%%%%%%%%%%%%%%%%%%%%%%%%%

%%%%%%%%%%%%%%%%%%%%%%%%%%%%%%%%%%%%%%%%%%%%%%%%%%%%%%%%%%%

\section{\textbf{Generalization of Polynomial Contractions}}

\begin{prop} Every polynomial contraction is a path-averaged polynomial contraction
\end{prop}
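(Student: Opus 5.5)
The plan is to apply the polynomial contraction inequality termwise along the orbits of $x$ and $y$ and then add the resulting inequalities. Let $T$ be a polynomial contraction with constant $\lambda\in[0,1)$, degree $k$ and coefficient maps $a_0,\dots,a_k$. In the definition of PA-polynomial contraction take $r=k$, the same maps $a_i$, and $N=1$. For $\alpha$ take $\alpha=\lambda$ if $\lambda>0$. If $\lambda=0$, take any $\alpha\in(0,1)$, for instance $\alpha=\tfrac12$; this is harmless because the right-hand side is a sum of nonnegative terms, so enlarging the constant only weakens the inequality.

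Fix $x,y\in X$ and $n\ge 1$. For each $m=0,1,\dots,n-1$, apply the polynomial contraction inequality to the pair $(T^{m}x,T^{m}y)$. Since $T(T^m x)=T^{m+1}x$, this gives
$$\sum_{i=0}^{k} a_i(T^{m+1}x,T^{m+1}y)\,d^{i}(T^{m+1}x,T^{m+1}y)\le \lambda\sum_{i=0}^{k} a_i(T^{m}x,T^{m}y)\,d^{i}(T^{m}x,T^{m}y).$$
Summing these $n$ inequalities over $m$ from $0$ to $n-1$ gives exactly the path-averaged polynomial inequality with constant $\lambda$. Because every summand on the right is nonnegative ($a_i\ge 0$ and $d^i\ge 0$), $\lambda$ may be replaced by $\alpha\ge\lambda$. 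This holds for all $n\ge N=1$, which proves the proposition.

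There is no substantive difficulty here. The only point needing care is bookkeeping with the definitions: the polynomial contraction allows $\lambda=0$ while the PA-definition requires $\alpha\in(0,1)$, and this is resolved by the monotonicity remark above. One should also note that the index called $k$ in the polynomial contraction becomes $r$ in the PA-definition, so that it does not clash with the summation index.
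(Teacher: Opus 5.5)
Your proof is correct and is essentially the paper's argument: apply the polynomial contraction inequality to each pair $(T^m x, T^m y)$ and sum over $m=0,\dots,n-1$ to obtain the PA-polynomial inequality with $N=1$. Your handling of the case $\lambda=0$ fills a small gap the paper glosses over, since the paper simply assumes the constant already lies in $(0,1)$; you replace it by some $\alpha\in(0,1)$, which is justified because the right-hand side is nonnegative.
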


\begin{proof} Suppose $$\sum_{i=0}^{r} a_i(Tx,Ty)d^{i}(Tx,Ty)\leq \alpha \sum_{i=0}^{r} a_i(x,y)d^{i}(x,y)$$ for all $x,y\in X$ and $\alpha \in (0,1)$. Then for any $x,y\in X$,
$$\sum_{i=0}^{r} a_i(T^{k+1}x,T^{k+1}y) d^{i}(T^{k+1}x,T^{k+1}y)\leq \alpha \sum_{i=0}^{r} a_i(T^{k}x,T^{k}y) d^{i}(T^{k}x,T^{k}y)$$ Then  summing from
$k=0$ to $n-1$ we have
$$\sum_{k=0}^{n-1}\sum_{i=0}^{r} a_i(T^{k+1}x,T^{k+1}y) d^{i}(T^{k+1}x,T^{k+1}y)\leq \alpha \sum_{k=0}^{n-1}\sum_{i=0}^{r} a_i(T^{k}x,T^{k}y)d^{i}(T^{k}x,T^{k}y)$$ Thus $T$ is a PA-polynomial contraction with $N=1$.
\end{proof}

\begin{exmp} (Path-averaged polynomial contraction not Banach). Let $X=\{0,1,2\}$ with the discrete metric
$$
\displaystyle d(x,y)= \begin{cases}
1& \text{if $x\neq y$}\\
0& \text{if $x=y$}
\end{cases}
$$
Define $$T(0)=1, ~T(1)=2, ~T(2)=2$$ $T$ is not a Banach contraction since
$$d(T0,T1)=d(1,2)=1=d(0,1)$$
$$d(T0,T2)=d(1,2)=1=d(0,2)$$
So, no $\alpha<1$ satisfies $d(Tx,Ty)\leq \alpha d(x,y)$ for all $x,y$. However, $T$ is a PA-polynomial contraction with $\alpha=\frac{1}{2}$, $N=2$, $r=2$, $a_0\equiv 0$,
$a_1\equiv a_2\equiv 1$. For any $x,y\in X$ and $n\geq 2$:
$$\sum_{k=0}^{n-1}[d(T^{k+1}x.T^{k+1}y)+d^{2}(T^{k+1}x,T^{k+1}y)]\leq \frac{1}{2} \sum_{k=0}^{n-1}[d(T^{k}x,T^{k}y)+d^{2}(T^{k}x,T^{k}y)]$$ \\ \\
\textbf{Verification for pair $(0,1)$ at $n=2$:} \\ \\
\begin{align*}
LHS&= d(T0,T1)+d^{2}(T0,T1)+d(T^{2}0,T^{2}1)+d^{2}(T^{2}0,T^{2}1)\\
&=d(1,2)+d^{2}(1,2)+d(2,2)+d^{2}(2,2)\\
&=1+1+0+0\\
&=2
\end{align*}
and
\begin{align*}
RHS&=\frac{1}{2}[d(0,1)+d^{2}(0,1)+d(T0,T1)+d^{2}(T0,T1)]\\
&=\frac{1}{2}[1+1+1+1]\\
&=2
\end{align*}
Thus, $2\leq 2$ \\ \\
\textbf{Verification for pair $(0,2)$ at $n=2$:}\\\\
\begin{align*}
LHS&=d(T0,T2)+d^{2}(T0,T2)+d(T^{2}0,T^{2}2)+d^{2}(T^{2}0,T^{2}2)\\
&=d(1,2)+d^{2}(1,2)+d(2,2)+d^{2}(2,2)\\
&=1+1+0+0\\
&=2
\end{align*}
and
\begin{align*}
RHS&=\frac{1}{2}[d(0,2)+d^{2}(0,2)+d(T0,T2)+d^{2}(T0,T2)]\\
&=\frac{1}{2}[1+1+1+1]\\
&=2
\end{align*}
Thus, $2\leq 2$ \\ \\
\textbf{Verification for pair $(1,2)$ at $n=2$:}
\begin{align*}
LHS&=d(T1,T2)+d^{2}(T1,T2)+d(T^{2}1,T^{2}2)+d^{2}(T^{2}1,T^{2}2)\\
&=d(2,2)+d^{2}(2,2)+d(2,2)+d^{2}(2,2)\\
&=0
\end{align*}
and
\begin{align*}
RHS&=\frac{1}{2}[d(1,2)+d^{2}(1,2)+d(T1,T2)+d^{2}(T1, T2)]\\
&=\frac{1}{2}[1+1+0+0] \\
&=1
\end{align*}
Thus, $0<1$. For $n>2$ and any pair, $T^{k}x=T^{k}y=2$, for $k\geq 2$, so all distances are 0 for $k\geq 2$. The sums are thus identical to the $n=2$ case. Hence $T$ is a
PA-polynomial contraction but not a Banach contraction.
\end{exmp}

\end{document}